\documentclass{amsart}
\usepackage{graphicx}
\usepackage{color}
\usepackage{enumerate,url,amssymb,mathrsfs,upref,pdfsync}

\bibliographystyle{plain}

\theoremstyle{plain}
\newtheorem{theorem}{Theorem}
\newtheorem{lemma}[theorem]{Lemma}

\theoremstyle{definition}

\theoremstyle{remark}
\newtheorem{remark}[theorem]{Remark}

\makeatletter
\renewcommand*\subjclass[2][2000]{%
  \def\@subjclass{#2}%
  \@ifundefined{subjclassname@#1}{%
    \ClassWarning{\@classname}{Unknown edition (#1) of Mathematics
      Subject Classification; using '1991'.}%
  }{%
    \@xp\let\@xp\subjclassname\csname subjclassname@#1\endcsname
  }%
}

\def\d#1{{#1\kern-0.4em\char"16\kern-0.1em}}
\def\D#1{{\raise0.2ex\hbox{-}\kern-0.4em #1}}
\def\dj{d\kern-0.4em\char"16\kern-0.1em}
\def\Dj{\mbox{\raise0.3ex\hbox{-}\kern-0.4em D}}
%

\newcounter{zd}

\newcounter{zdr}[subsection]


\def\cal{\mathcal}


\begin{document}
\title[]{NORM OF THE BERGMAN PROJECTION ONTO THE BLOCH SPACE}

\author{David Kalaj}
\address{Faculty of natural sciences and mathematics, University of Montenegro,
 D\v zor\v za Va\v singtona b.b. 81000, Podgorica, Montenegro}
\email{davidkalaj@gmail.com}

 \author{\Dj OR\Dj IJE VUJADINOVI\'C }
\address{Faculty of natural sciences and mathematics, University of Montenegro, D\v zor\v za Va\v singtona b.b. 81000 Podgorica, Montenegro}
 \email{  djordjijevuj@t-com.me}

\begin{abstract}
We consider weighted Bergman projection $P_{\alpha}: L^{\infty}(\Bbb B) \rightarrow {\cal B} $ where $\alpha>-1$ and $\cal B$ is the Bloch  space of the unit ball $\Bbb B$ of the complex space $\Bbb C^n.$   We obtain the exact norm of the operator $P_{\alpha}$ where the Bloch space is observed as a space  with norm (and semi-norm) induced  from  the Besov space $B_{p},0<p<\infty,(B_{\infty}=\cal B).$  Our work contains, as a special case, the   main results from \cite{Kalaj} and \cite{Ant}.
 \end{abstract}

\footnote{2010 \emph{Mathematics Subject Classification}: Primary
    30H20, 30H30}

\keywords{ Bergman projection, Bloch space, weak type}

\maketitle

\section{Introduction and Preliminaries}

Throughout this paper we denote by $\Bbb C^n$  complex $n-$dimensional space. Here $n$ is an integer greater than or equal to 1. As  usually $\left<\cdot,\cdot\right>$ represents the inner product in $\Bbb C^n,$
$$\left<z,w\right>=z_1\bar{w}_1+\cdot\cdot\cdot+z_{n}\bar{w}_n,\enspace z,w\in\Bbb C^n, $$
where $z=(z_1,...,z_n)$ and $w=(w_1,...,w_n)$ are coordinate representations in the standard base $\{e_1,...,e_n\}$ of $\Bbb C^n.$
The Euclidean norm in $\Bbb C^n$ is given by
$$|z|=\left<z,z\right>^{\frac{1}{2}}.$$
Let us denote by $\Bbb B$  the unit ball in $\Bbb C^n,\enspace\Bbb B= \{z:|z|<1\}$ and $\Bbb S$ denotes its boundary.

The volume measure $dv$ in $\Bbb C^n$ is normalized, i.e. $v(\Bbb B)=1.$ Also, we are going to treat a class of weighted measures  $dv_{\alpha}$ on $\Bbb B,$ which are defined by
$$dv_{\alpha}(z)=c_{\alpha}(1-|z|^2)^{\alpha}dv(z),\enspace z\in \Bbb B $$
where $\alpha>-1,$ and $c_{\alpha}$ is a constant such that $v_{\alpha}(\Bbb B)=1.$ Direct calculation gives:
$$c_{\alpha}=\frac{\Gamma(n+\alpha+1)}{n!\Gamma(\alpha+1)}.$$

We let $\sigma$ be unitary-invariant positive Borel measure on $\Bbb S$ for which
$\sigma(\Bbb S)=1.$ The term "unitary-invariant" refers to the unitary transformations of $\Bbb C^{n}.$ More precisely, if $U$ is unitary transformation of $\Bbb C^{n},$ then for any $f\in L^{1}(\Bbb S,d\sigma),$
$$\int_{\Bbb S}f(U\zeta)d\sigma(\zeta)=\int_{\Bbb S}f(\zeta)d\sigma(\zeta).$$
The automorphism group of $\Bbb B$ denoted by ${\bf Aut}(\Bbb B)$ consists of all bi-holomorphic mappings of $\Bbb B$ (see~\cite{Rudin1}). A special class of automorphism group are involutive automorphisms, which are, for any point $a\in \Bbb B,$ defined as
$$\varphi_{a}(w)=\frac{a-\frac{\left<w,a\right>a}{|a|^2}-\sqrt{1-|a|^2}(w-\frac{\left<w,a\right>a}{|a|^2})}{1-\left<z,a\right>},\enspace w\in \Bbb B. $$
When $a=0,$ we define $\varphi_{a}=-{\bf Id_{\Bbb B}}.$
We should observe that, $\varphi_{a}(0)=a$ and $\varphi_{a}\circ\varphi_{a}={\bf  Id_{\Bbb B}}.$

In the case when we treat $\Bbb C^n$ as the real $2n-$dimensional space $\Bbb R^{2n},$ the real Jacobian of $\varphi_{a}$ is given by
$$(J_{R}\varphi_a)(w)=\left(\frac{1-|a|^2}{|1-\left<a,w\right>|^{2}}\right)^{n+1}.$$
 We are going to use the following  identities ($a\in \Bbb B$)
\begin{equation}
1-|\varphi_{a}(w)|^{2}=\frac{(1-|a|^2)(1-|z|^2)}{|1-\left<w,a\right>|^2},\enspace z\in \Bbb B
\end{equation}
and
\begin{equation}
1-\left<\varphi_{a}(z),\varphi_{a}(w)\right>=\frac{(1-\left<a,a\right>)(1-\left<z,w\right>)}{(1-\left<z,a\right>)(1-\left<a,w\right>)},z,w\in \Bbb B.
\end{equation}
 Traditionally, $H(\Bbb B)$ denotes the space of all holomorphic functions on $\Bbb B$ and the space of all bounded holomorphic functions  is denoted by $H^{\infty}(\Bbb B).$

  The complex gradient of holomorphic function $f\in H(\Bbb B)$ is defined as
  $$\nabla f(z)=\left(\frac{\partial f}{\partial z_1}(z),...,\frac{\partial f}{\partial z_n}(z)\right), \enspace z\in \Bbb B.$$
 The Bloch space ${\cal B}$ contains all holomorphic functions in $\Bbb B$ with finite  semi-norm
 $$\| f\|_{\beta}=\sup_{z\in \Bbb B}{(1-|z|^2)|\nabla f(z)|}.$$
We can obtain proper norm by adding $|f(0)|,$ i.e.
$$\| f\|_{{\cal B}}=|f(0)|+\| f\|_{\beta}.$$
The Bloch space is the Banach space in above norm. More information about the Bloch space reader can find in \cite{Zhu}.

The Bergman projection  operator $P_{\alpha}\enspace (\alpha>-1)$ is a central mapping in the study of analytic function spaces and it is defined as follows:

$$P_{\alpha}f(z)=\int_{\Bbb B}{\cal K_{\alpha}(z,w)}f(w)dv_{\alpha}(w),\enspace f\in L^{p}(\Bbb B,dv_{\alpha}),$$
where $L^{p}(\Bbb B,dv_{\alpha})$ is the Lebesgue space of all measurable functions on $\Bbb B$ in which modulus with exponent
$p\enspace (1\leq p<\infty)$ is integrable on $\Bbb B$  with  respect to the measure $dv_{\alpha}.$  The case $p=\infty$ corresponds to the space of essentially bounded functions in the unit ball. Here
$${\cal K_{\alpha}}(z,w)=\frac{1}{(1-\left<z,w\right>)^{n+1+\alpha}},\enspace z,w\in\Bbb B$$
is the weighted Bergman kernel. Concerning the Bergman projection, the following two questions are of the main interest for research: establishing the boundedness and determining the exact norm.

Here we want to point out that the Bergman projection $P_{\alpha}:L^{\infty}(\Bbb B)\rightarrow {\cal B}$ is bounded and onto (see~\cite{Zhu}).

In the case when $n=1$
 for the semi-norm $\| f\|=\sup_{|z|<1}(1-|z|^2)|f'(z)|,$ Per\"al\"a (see~\cite{Ant}) determined the norm of the Bergman projection. He obtained that  $\|P\|=\sup_{\|f\|\leq 1}\|Pf\|=\frac{8}{\pi}.$   A generalization of this result in the unit ball $\Bbb B\subset \Bbb C^{n}$ was done by Kalaj and Markovi\'c (see~\cite{Kalaj}), where it is shown that  $\|P\|=\frac{\Gamma(n+\alpha+2)}{\Gamma^2(\frac{n+\alpha+2}{2})}$. Later, in work of Per\"al\"a (see \cite{Ant1}), the author completed his earlier result from \cite{Ant} and its generalization in \cite{Kalaj} by finding the norm of the Bergman projection w.r.t. to the proper norm of the Bloch space. We remark that calculating the exact norm of Bergman projection $P$ on $L^
p$ spaces $1<p<\infty$ is a long-standing problem and only partial results are known, see  \cite{Zhu1, Dost}.\\

There are several ways to define norm on the Bloch space, which makes it the Banach space. To this and, let us recall a definition of  the Besov space $B_{p}\enspace(0<p<\infty)$ in unit ball $\Bbb B\subset \Bbb C^n$ (for a reference see~\cite{Zhu}).\\
 The Besov space $B_{p}$ contains all holomorphic functions $f$ in $\Bbb B$ such that norm

\begin{equation}
\| f\|_{B_p}^{p}=\sum_{|m|\leq N-1}\left|\frac{\partial^{|m|}f}{\partial z^{m}}(0)\right|^{p}+\sum_{|m|=N}\int_{\Bbb B}\left|(1-|z|^2)^{N}\frac{\partial^{N}f}{\partial z^{m}}(z)\right|^{p}d\tau(z),
\end{equation}

is finite, where $N$ is a positive integer such that $pN>n.$ The measure $d\tau$ is given by  $$d\tau(z)=\frac{dv(z)}{(1-|z|^2)^{n+1}},\enspace z\in \Bbb B$$
 and multi-index $m,$ represents  $n$-tuples of non-negative integers, $m=(m_1,...,m_n),$ where $|m|=\sum_{i=1}^{n}m_{i}.$\\
  By the semi-norm  $\|\cdot\|_{\beta_p}$ in the Besov space $B_{p}\enspace (0<p<\infty)$  we imply

$$\|f\|_{\beta_{p}}^{p}=\sum_{|m|=N}\int_{\Bbb B}\left|(1-|z|^2)^{N}\frac{\partial^{N}f}{\partial z^{m}}(z)\right|^{p}d\tau(z).$$
When $p=\infty$ the Besov space $B_p$ is the Bloch space, $B_{\infty}={\cal B}.$   We want to define norm (semi-norm) on the Bloch space $B_{\infty}$ induced from the Besov space $B_{p}$ as $ p\rightarrow\infty.$\\

Before we find explicit formula for the norm in the mentioned case,  let us  state short  version of  \cite[Theorem~3.5]{Zhu}.
\begin{theorem}
Suppose $N$ is a positive integer, and $f$ is holomorphic in $\Bbb B,$ then following conditions are equivalent:\\
\vspace{3mm}
(1)$f\in{\cal B}$\\
\vspace{2mm}
(2)The functions $$(1-|z|^2)^{N}\frac{\partial^{N}f}{\partial z^{m}}(z),\enspace |m|=N,$$
are bounded in $\Bbb B.$
\end{theorem}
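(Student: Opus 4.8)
The plan is to prove the two implications separately, exploiting the unitary and Möbius symmetry of $\Bbb B$ together with the standard subharmonicity/mean-value estimates for holomorphic functions.

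First I would treat the direction $(1)\Rightarrow(2)$, which is the ``soft'' half. Assume $f\in{\cal B}$, so $M:=\sup_{z\in\Bbb B}(1-|z|^2)|\nabla f(z)|<\infty$. The idea is to reduce the boundedness of the higher-order weighted derivatives to this first-order bound by an iteration argument. Fix a multi-index $m$ with $|m|=N$. For a point $z\in\Bbb B$, consider the involutive automorphism $\varphi_z$ and the composition $g=f\circ\varphi_z$; since $\varphi_z(0)=z$, estimating $\partial^N f/\partial z^m$ at $z$ amounts to estimating the $N$-th order Taylor coefficients of $g$ at the origin, which by the Cauchy estimates on a fixed small ball $|w|<r_0$ are controlled by $\sup_{|w|<r_0}|\nabla g(w)|$ (after first controlling $g-g(0)$). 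Using the chain rule and identity~(1), $(1-|w|^2)|\nabla g(w)|$ is comparable to $(1-|\varphi_z(w)|^2)|\nabla f(\varphi_z(w))|\le M$ up to a constant depending only on $r_0$ and $n$; hence all derivatives of $g$ of order $\le N$ are bounded on $|w|<r_0$ by a constant times $M$, and the factor $(1-|z|^2)^N$ arises exactly from the Jacobian/chain-rule weights when one translates back from $g$ to $f$ at the point $z$. This gives $(1-|z|^2)^N|\partial^N f/\partial z^m(z)|\le C(n,N)M$ uniformly in $z$.

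Next, the harder direction $(2)\Rightarrow(1)$: assume $(1-|z|^2)^N|\partial^N f/\partial z^m(z)|\le C$ for all $|m|=N$ and all $z$, and deduce $f\in{\cal B}$. The strategy is downward induction on the order: it suffices to show that if all partial derivatives of order $k$ (with the weight $(1-|z|^2)^k$) are bounded, then so are all partial derivatives of order $k-1$ (with weight $(1-|z|^2)^{k-1}$), for $k=N,N-1,\dots,1$. To pass from order $k$ to order $k-1$ one integrates: writing a derivative of order $k-1$ as $h$, we have $|\nabla h(z)|\le C'(1-|z|^2)^{-k}$, and integrating this gradient bound along the radial (or geodesic) segment from $0$ to $z$ produces a bound of the form $|h(z)|\le A + B\int_0^{|z|}(1-t^2)^{-k}\,dt$. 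For $k\ge 2$ the integral is $O((1-|z|^2)^{-(k-1)})$, which is exactly the desired weighted bound of order $k-1$; for $k=1$ the integral is $O(\log\frac{1}{1-|z|^2})$, i.e. $|h(z)|=O(\log\frac1{1-|z|^2})$, the classical ``Bloch implies logarithmic growth'' estimate. Thus after the induction we know every $\partial f/\partial z_j$ satisfies $(1-|z|^2)|\partial f/\partial z_j(z)|$ is bounded, which is precisely $\|f\|_\beta<\infty$, i.e. $f\in{\cal B}$.

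The main obstacle I anticipate is the bookkeeping in $(1)\Rightarrow(2)$: the chain rule for $f\circ\varphi_z$ in several complex variables produces derivatives of $\varphi_z$ up to order $N$, and one must check that the resulting weights combine to give exactly the factor $(1-|z|^2)^N$ and no worse — here identities~(1) and~(2) and the explicit Jacobian formula for $\varphi_a$ are the essential tools, and one should verify that the automorphism-invariant quantities $(1-|z|^2)\nabla$ transform cleanly (this is standard but must be done carefully in the multivariable setting). A cleaner alternative that avoids most of this is to prove $(1)\Rightarrow(2)$ via the integral representation: any $f\in{\cal B}$ can be reproduced by a Bergman-type kernel, and differentiating under the integral sign $N$ times and estimating $\int_{\Bbb B}(1-|w|^2)^{s}|1-\langle z,w\rangle|^{-(n+1+s+N)}\,dv(w)=O((1-|z|^2)^{-N})$ (the standard Forelli–Rudin estimate) yields the bound directly. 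Either route works; the Forelli–Rudin route sidesteps the chain-rule combinatorics but requires the reproducing formula for Bloch functions as input.
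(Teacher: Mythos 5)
First, a point of comparison: the paper does not prove this statement at all — it is quoted, with citation, as a short version of Zhu's Theorem~3.5 — so there is no in-paper argument to match and your proposal has to stand on its own. Its $(2)\Rightarrow(1)$ half does: the gradient of any derivative of order $N-1$ consists of order-$N$ derivatives, hence is $O((1-|z|^2)^{-N})$, and radial integration plus downward induction gives the weighted bounds of every lower order, in particular the Bloch condition (your $k=1$, logarithmic step is not even needed). The genuine gap is in your primary route for $(1)\Rightarrow(2)$. It rests on the claim that $(1-|w|^2)|\nabla (f\circ\varphi_z)(w)|$ is comparable to $(1-|\varphi_z(w)|^2)|\nabla f(\varphi_z(w))|$ with constants depending only on $r_0$ and $n$. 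This is false for $n\ge 2$: the quantity $(1-|z|^2)|\nabla f(z)|$ is not M\"obius invariant in several variables. For the invariant gradient $\tilde\nabla f(z)=\nabla(f\circ\varphi_z)(0)$ one has $|\tilde\nabla f(z)|^2=(1-|z|^2)\bigl(|\nabla f(z)|^2-|\langle\nabla f(z),\bar z\rangle|^2\bigr)$, so only $(1-|z|^2)|\nabla f(z)|\le |\tilde\nabla f(z)|\le (1-|z|^2)^{1/2}|\nabla f(z)|$ holds pointwise: complex-tangential directions scale like $(1-|z|^2)^{1/2}$ under automorphisms, not like $(1-|z|^2)$. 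What your argument actually needs is $\sup_z|\tilde\nabla f(z)|\le C\|f\|_\beta$, i.e. the nontrivial ``tangential gain'' half of the equivalence between the gradient and invariant-gradient seminorms (Zhu, Theorem~3.4); invoking it would make the route essentially circular, since that equivalence is proved by the same kernel machinery as the statement you are proving. In $n=1$ your comparability is exact by Schwarz--Pick, which is why the plan feels right; the failure is genuinely multivariable and is not just chain-rule bookkeeping.

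Your fallback route (reproducing formula plus Forelli--Rudin) is the correct one — it is essentially how the cited result is proved — but as written it too has a slip: you cannot reproduce $f$ itself and then ``estimate directly'', because a Bloch function need not be bounded; inserting its logarithmic growth $|f(w)|\le C\log\frac{2}{1-|w|^2}$ into the Forelli--Rudin estimate only yields $(1-|z|^2)^{N}\left|\frac{\partial^{N}f}{\partial z^{m}}(z)\right|=O\!\left(\log\frac{2}{1-|z|^2}\right)$. The standard fix: apply the reproducing formula to a first-order derivative $\partial f/\partial z_j$, which lies in $L^{1}(\Bbb B,dv_s)$ for $s>0$ because $|\partial f/\partial z_j(w)|\le \|f\|_\beta/(1-|w|^2)$, differentiate $N-1$ more times under the integral, and then the Forelli--Rudin estimate with weight $(1-|w|^2)^{s-1}$ and exponent $n+1+(s-1)+N$ gives exactly $C\|f\|_\beta(1-|z|^2)^{-N}$. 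With that modification the $(1)\Rightarrow(2)$ half closes; without it, neither of your two routes is complete as stated.
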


Now we prove the following lemma:

\begin{lemma}
Let $B_{p},1<p<\infty,$ be the Besov space and $\|\cdot\|_{p}$ is the Besov norm defined by (3) . Then
$$\|f\|_{\beta_{p}}\rightarrow\|f\|_{\tilde{\cal B}},\enspace p\rightarrow\infty\enspace\mbox{provided}\enspace f\in B_{r}\cap{\cal B}\enspace\mbox{for some}\enspace r\in(1,\infty),$$
where
$$\|f\|_{\tilde{\cal B}}=\max_{|m|=N}{\sup_{z\in {\Bbb B}}(1-|z|^2)^{N}\left|\frac{\partial^{N}f}{\partial z^{m}}(z)\right|}.\hspace{10mm} {\rm(1)}$$

\end{lemma}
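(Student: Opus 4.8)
The plan is to prove this as a pointwise-to-uniform convergence statement dressed up as an $L^p \to L^\infty$ limit. Fix a holomorphic $f$ on $\Bbb B$ with $f \in B_r \cap {\cal B}$ for some $r \in (1,\infty)$, and set, for each multi-index $m$ with $|m| = N$,
$$
g_m(z) = (1-|z|^2)^N \frac{\partial^N f}{\partial z^m}(z),\qquad z \in \Bbb B.
$$
By Theorem~1 (the cited \cite[Theorem~3.5]{Zhu}), membership of $f$ in ${\cal B}$ is exactly the statement that each $g_m$ is bounded on $\Bbb B$, so $\|f\|_{\tilde{\cal B}} = \max_{|m|=N} \|g_m\|_{L^\infty(\Bbb B, d\tau)}$ is finite (note $d\tau$ is mutually absolutely continuous with $dv$, so the $L^\infty$ norm w.r.t.\ $d\tau$ is the ordinary essential sup). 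Meanwhile, for $N < \infty$ with $pN > n$,
$$
\|f\|_{\beta_p}^p = \sum_{|m|=N} \int_{\Bbb B} |g_m(z)|^p \, d\tau(z) = \sum_{|m|=N} \|g_m\|_{L^p(\Bbb B, d\tau)}^p .
$$
So the claim reduces to: if each $g_m \in L^{r}(\Bbb B, d\tau) \cap L^\infty(\Bbb B, d\tau)$, then $\bigl(\sum_{|m|=N} \|g_m\|_{L^p(d\tau)}^p\bigr)^{1/p} \to \max_{|m|=N}\|g_m\|_{L^\infty(d\tau)}$ as $p \to \infty$.

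The key steps are then standard measure-theory facts. First, for a single function $h \in L^{r}(\mu) \cap L^\infty(\mu)$ on a (here infinite, since $\tau(\Bbb B) = \infty$) measure space, one has $\|h\|_{L^p(\mu)} \to \|h\|_{L^\infty(\mu)}$ as $p \to \infty$; the $r < \infty$ hypothesis is exactly what makes this work on an infinite measure space, via the interpolation inequality $\|h\|_{L^p} \le \|h\|_{L^r}^{r/p}\,\|h\|_{L^\infty}^{1-r/p}$ for $p \ge r$ (giving $\limsup_p \|h\|_{L^p} \le \|h\|_{L^\infty}$), together with the elementary lower bound $\|h\|_{L^p} \ge (t)\,\mu(\{|h| > t\})^{1/p}$ for any $t < \|h\|_{L^\infty}$, where $\mu(\{|h|>t\}) \in (0,\infty)$ is finite precisely because $h \in L^r$ (giving $\liminf_p \|h\|_{L^p} \ge t$, then let $t \uparrow \|h\|_{L^\infty}$). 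Second, for the finite sum over $|m| = N$, combine these: with $M = \max_m \|g_m\|_{L^\infty}$ and $K$ the number of such multi-indices, $\max_m \|g_m\|_{L^p} \le \bigl(\sum_m \|g_m\|_{L^p}^p\bigr)^{1/p} \le K^{1/p}\max_m\|g_m\|_{L^p}$, and $K^{1/p} \to 1$, so $\|f\|_{\beta_p} \to \max_m \|g_m\|_{L^\infty} = \|f\|_{\tilde{\cal B}}$.

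I expect the only real point requiring care — the ``main obstacle,'' though it is mild — to be the handling of the \emph{infinite} measure $\tau$: the usual ``$L^p \to L^\infty$'' fact is most often quoted for finite measure spaces, and there the $B_r$ hypothesis would be superfluous. Here one must genuinely use $f \in B_r$, i.e.\ $g_m \in L^r(d\tau)$, both to get the upper bound via the interpolation inequality (without some integrability control, $\|g_m\|_{L^p}$ could be $+\infty$ for all finite $p$ even though $g_m$ is bounded) and to ensure the super-level sets $\{|g_m| > t\}$ have finite $\tau$-measure so that the lower bound $t\,\tau(\{|g_m|>t\})^{1/p} \to t$ is meaningful. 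One should also note in passing that $B_r$ as defined by (3) for the exponent $r$ may require its own integer $N_r$ with $r N_r > n$; since Theorem~1 shows the Bloch condition is $N$-independent, one first observes $f \in B_r$ implies $g_m \in L^r(d\tau)$ for \emph{every} admissible $N$ (or simply runs the argument with a fixed $N$ large enough to be admissible for both $r$ and the $p$'s under consideration, which is all that is needed since we send $p \to \infty$). Everything else is routine.
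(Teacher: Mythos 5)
Your proposal is correct and follows essentially the same route as the paper: reduce to the statement that for finitely many functions in $L^{r}\cap L^{\infty}$ one has $\bigl(\sum_k\|g_k\|_p^p\bigr)^{1/p}\to\max_k\|g_k\|_\infty$, using the single-function limit $\|g\|_p\to\|g\|_\infty$ together with the sandwich $\max_k\|g_k\|_p\le\bigl(\sum_k\|g_k\|_p^p\bigr)^{1/p}\le K^{1/p}\max_k\|g_k\|_p$. The only difference is cosmetic: you prove the single-function fact directly (interpolation plus the level-set lower bound, with attention to the infinite measure $\tau$), whereas the paper simply cites the corresponding exercise in Rudin.
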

\begin{proof}
We will prove the lemma in more general setting. Namely, if $\{f_{k}\}_{k=1}^{N}$ is a sequence of measurable functions on the measure space $(\Omega,\mu)$ such that $$f_{k}\in L^{r}(\Omega,\mu)\cap L^{\infty}(\Omega,\mu),\enspace k=1,...,N,\enspace\mbox{for some}\enspace r\in (1,\infty),$$
then
$$\left(\sum_{k=1}^{N}\|f_{k}\|_{p}^{p}\right)^{\frac{1}{p}}\rightarrow\max_{1\leq k\leq N}\|f_k\|_{\infty}.$$
The last relation is an easy consequence of the following relation $\lim_{p\to\infty}\|f_{k}\|_p \to \|f_k\|_\infty$  (see e.g. \cite[p.~73,~Ex.~4]{Rudin}) and
 the following obvious inequalities
 $$\left(\sum_{k=1}^N\|f_k\|^p_p\right)^{1/p}\le N^{1/p}  \max_k\{\|f_k\|_p\},$$
 and
 $$\left(\sum_{k=1}^N\|f_k\|^p_p\right)^{1/p}\geq \max_k\{\|f_k\|_p\}.$$
 It follows that if $f\in B_{r}\cap{\cal B}$ for some $r>1$, then  $$\|f\|_{\tilde{\cal B}}=\lim_{p\to\infty}\|f\|_{\beta_p}= \max_{|m|=N} \sup_{z\in\Bbb B}\left|(1-|z|^2)^{N}\frac{\partial^{N}f}{\partial z^{m}}(z)\right|.$$
\end{proof}
Let us notice that in the same way
$$\left(\sum_{|m|\leq N-1}\left|\frac{\partial^{|m|}f}{\partial z^{m}}(0)\right|^{p}\right)^{\frac{1}{p}}\rightarrow\max_{|m|\leq N-1}{\left|\frac{\partial^{|m|}f}{\partial z^{m}}(0)\right|},\enspace \mbox{as}\enspace p\rightarrow\infty.$$
Thus, we define the proper norm $\|\cdot\|_{\cal B}$ on the Bloch space as follows
\begin{equation}\label{bloc}\|f\|_{\cal B}=\max_{|m|\le N-1}\left|\frac{\partial^{|m|}f}{\partial z^{m}}(0)\right|+ \max_{|m|=N} \sup_{z\in \Bbb B}\left|(1-|z|^2)^{N}\frac{\partial^{N}f}{\partial z^{m}}(z)\right|,\enspace f\in {\cal B},\enspace N\in {\rm{\bf N}},
\end{equation}
and semi-norm $\|\cdot\|_{\tilde{\cal B}}$  is defined as
\begin{equation}\|f\|_{\tilde{\cal B}}= \max_{|m|=N} \sup_{z\in \Bbb B}\left|(1-|z|^2)^{N}\frac{\partial^{N}f}{\partial z^{m}}(z)\right|,\enspace f,\in {\cal B}\enspace N\in {\rm{\bf N}}.
\end{equation}
Although in definition  (3) of the norm $\|\cdot\|_{p}$ for the Besov space $B_{p}$ we have condition $pN>n,$ by the formula (4) we can define $\|\cdot\|_{\cal B}$ on $\cal B$ for any $N.$ This is not surprising because $\infty\cdot N>n$.

The proof of the next lemma is straightforward and we omit the proof.
\begin{lemma}
The Bloch space $\cal B$ is a Banach space in the norm (4)
\end{lemma}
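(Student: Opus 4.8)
The plan is to check the three norm axioms and then prove completeness directly from the definition (\ref{bloc}).

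Homogeneity $\|\lambda f\|_{\cal B}=|\lambda|\,\|f\|_{\cal B}$ and the triangle inequality are immediate from the linearity of the operators $\partial^{|m|}/\partial z^m$ together with the corresponding properties of $|\cdot|$ on $\Bbb C$ and of $\sup$. The only axiom needing an argument is definiteness. Suppose $\|f\|_{\cal B}=0$. Since $(1-|z|^2)^N>0$ for every $z\in\Bbb B$, vanishing of the second summand in (\ref{bloc}) forces $\partial^N f/\partial z^m\equiv 0$ on $\Bbb B$ for all $|m|=N$; as $f$ is holomorphic, this means $f$ is a polynomial of degree at most $N-1$. Vanishing of the first summand then says that all the numbers $\partial^{|m|}f/\partial z^m(0)$ with $|m|\le N-1$ — equivalently, up to factorials, all Taylor coefficients of $f$ at the origin — are zero, and since a polynomial of degree $\le N-1$ has no other Taylor coefficients, $f\equiv 0$.

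For completeness, let $\{f_k\}$ be Cauchy for $\|\cdot\|_{\cal B}$. By (\ref{bloc}), for each multi-index $|m|\le N-1$ the scalars $\partial^{|m|}f_k/\partial z^m(0)$ form a Cauchy sequence in $\Bbb C$, hence converge to some $c_m$; and for each $|m|=N$ the functions $g_k^m:=(1-|z|^2)^N\,\partial^N f_k/\partial z^m$ form a Cauchy sequence in $\CB{\Bbb B}$, hence converge uniformly on $\Bbb B$ to a bounded $g^m$. On any ball $r\Bbb B$ with $r<1$, where $(1-|z|^2)^N$ is bounded below by a positive constant, this yields uniform convergence of $\partial^N f_k/\partial z^m$ to $h^m:=g^m/(1-|z|^2)^N$. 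Now a downward induction on the order $j=N-1,N-2,\dots,0$, based on the identity (for $|m'|=j$)
$$\frac{\partial^{|m'|}f_k}{\partial z^{m'}}(z)=\frac{\partial^{|m'|}f_k}{\partial z^{m'}}(0)+\int_0^1\sum_{i=1}^n z_i\,\frac{\partial}{\partial z_i}\frac{\partial^{|m'|}f_k}{\partial z^{m'}}(tz)\,dt,$$
shows that every partial derivative of $f_k$ of order at most $N$ converges locally uniformly on $\Bbb B$: the integrand involves only derivatives of order $j+1$, which converge locally uniformly by the inductive hypothesis, while the boundary term converges by the choice of $c_{m'}$. In particular $f_k\to f$ locally uniformly for some $f\in H(\Bbb B)$, with $\partial^N f/\partial z^m=h^m$ and $\partial^{|m|}f/\partial z^m(0)=c_m$.

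It remains to see $f\in{\cal B}$ and $f_k\to f$ in $\|\cdot\|_{\cal B}$. Since $(1-|z|^2)^N\,\partial^N f/\partial z^m=g^m\in\CB{\Bbb B}$ for every $|m|=N$, Theorem~1 gives $f\in{\cal B}$, and then $\|f\|_{\cal B}<\infty$ by (\ref{bloc}). Given $\eps>0$, pick $k_0$ with $\|f_k-f_l\|_{\cal B}<\eps$ for $k,l\ge k_0$; letting $l\to\infty$ and using that $\partial^{|m|}f_l/\partial z^m(0)\to c_m$ and that $g_l^m(z)\to g^m(z)$ pointwise (a consequence of the locally uniform convergence above), each of the two summands of $\|f_k-f_l\|_{\cal B}$ passes to the limit to bound the corresponding summand of $\|f_k-f\|_{\cal B}$, giving $\|f_k-f\|_{\cal B}\le 2\eps$ for $k\ge k_0$. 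Hence $f_k\to f$ and $\cal B$ is complete. The main obstacle is the reconstruction of the holomorphic limit $f$ from convergence of the top-order derivatives together with the finitely many Taylor data at $0$ — this is exactly where the integration-along-rays identity and the induction are needed; everything else is a routine limiting argument. Alternatively, one could invoke Theorem~1 and the known completeness of $\cal B$ in the norm $|f(0)|+\|f\|_\beta$ after checking that (\ref{bloc}) is equivalent to it, but proving the comparison constants needs essentially the same integral estimates, so this route is no shorter.
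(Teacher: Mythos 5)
Your proof is correct; the paper in fact omits the proof of this lemma altogether (declaring it straightforward), so there is no argument of the authors to compare against. The route you take --- definiteness by noting that vanishing of all order-$N$ derivatives forces $f$ to be a polynomial of degree at most $N-1$ whose remaining Taylor data at $0$ are killed by the first summand, and completeness by extracting uniform limits of the weighted top-order derivatives and reconstructing the lower-order derivatives through the integration-along-rays identity --- is exactly the standard argument the authors presumably had in mind, and all steps (including the final passage to the limit in the Cauchy estimate) are sound.
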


In the sequel, $\tilde{\cal B}$-norm  and ${\cal B}$-norm of the Bergman projection $P_{\alpha}:L^{\infty}\rightarrow {\cal B}$ are
\begin{equation}
\|P_{\alpha}\|_{\tilde{\cal B}}=\sup_{\|g\|_{\infty}\leq 1}\|P_{\alpha}g\|_{\tilde{\cal B}},
\end{equation}
and
\begin{equation}
\enspace\|P_{\alpha}\|_{\cal B}=\sup_{\|g\|_{\infty}\leq 1}\|P_{\alpha}g\|_{\cal B},
\end{equation}
respectively.\\

Now we state the main result of this paper.

\begin{theorem}
 Let $P_{\alpha}$ be the Bergman projection $P_{\alpha}:L^{\infty}(\Bbb B)\rightarrow {\cal B},$ where ${\cal B}$ is the Bloch space with the norm  (4).
Then
 $$\| P_{\alpha}\|_{\tilde{\cal B}}= \frac{\Gamma(n+N+\alpha+1)\Gamma(N)}{\Gamma^{2}(\frac{N}{2}+\frac{n+\alpha+1}{2})}.$$

\end{theorem}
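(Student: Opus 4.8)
The plan is to compute $\|P_\alpha\|_{\tilde{\mathcal B}}$ by reducing the supremum over $\|g\|_\infty\le 1$ to the evaluation of a single scalar integral. The key observation is that, for a multi-index $m$ with $|m|=N$, differentiating under the integral sign gives
$$\frac{\partial^N (P_\alpha g)}{\partial z^m}(z)=c_\alpha\,(n+1+\alpha)(n+2+\alpha)\cdots(n+N+\alpha)\int_{\Bbb B}\frac{\bar w^m\, g(w)\,(1-|w|^2)^\alpha}{(1-\langle z,w\rangle)^{n+1+\alpha+N}}\,dv(w),$$
so that, taking absolute values, $(1-|z|^2)^N\bigl|\partial^N(P_\alpha g)/\partial z^m(z)\bigr|$ is bounded above (uniformly in $g$ with $\|g\|_\infty\le1$) by
$$C_{N,\alpha}\,(1-|z|^2)^N\int_{\Bbb B}\frac{|w|^{m_1+\cdots+m_n}\,(1-|w|^2)^\alpha}{|1-\langle z,w\rangle|^{n+1+\alpha+N}}\,dv(w),$$
with equality approached by choosing $g(w)=\overline{\left(\tfrac{1-\langle z,w\rangle}{|1-\langle z,w\rangle|}\right)^{n+1+\alpha+N}}\,\tfrac{\bar w^m}{|w^m|}$ (a unimodular function, so admissible, modulo the usual density/measurability remarks). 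Hence the problem becomes: maximize over $z\in\Bbb B$ and over $|m|=N$ the quantity $C_{N,\alpha}(1-|z|^2)^N\int_{\Bbb B}\frac{|w^m|(1-|w|^2)^\alpha}{|1-\langle z,w\rangle|^{n+1+\alpha+N}}dv(w)$.

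The second step is to recognize that, for fixed $z$, the extremal multi-index is $m=(N,0,\dots,0)$ (or any permutation), because $|w^m|=|w_1|^{m_1}\cdots|w_n|^{m_n}\le |w|^N$ with the bound attained when all the mass is on one coordinate; more carefully one uses unitary invariance to rotate $z$ to $(|z|,0,\dots,0)=(r,0,\dots,0)$ and then a monotonicity/rearrangement argument in the integral. So it suffices to evaluate
$$F(r):=(1-r^2)^N\int_{\Bbb B}\frac{|w_1|^{N}(1-|w|^2)^\alpha}{|1-r w_1|^{n+1+\alpha+N}}\,dv(w),\qquad r\in[0,1),$$
and then $\|P_\alpha\|_{\tilde{\mathcal B}}=C_{N,\alpha}\,c_\alpha\sup_{0\le r<1}F(r)$. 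This is now a concrete Bergman-type integral: one integrates out the coordinates $w_2,\dots,w_n$ (the part of the ball orthogonal to $e_1$ at fixed $w_1$ contributes a beta-function factor via $\int_{|w'|<\sqrt{1-|w_1|^2}}(1-|w_1|^2-|w'|^2)^\alpha dv(w')$), reducing $F(r)$ to a one-complex-variable integral over the disk in $w_1$, which after passing to polar coordinates and using the standard expansion $|1-rw_1|^{-2\gamma}=\sum_k \bigl(\tfrac{(\gamma)_k}{k!}\bigr)^2 r^{2k}|w_1|^{2k}$ (with $\gamma=\tfrac{n+1+\alpha+N}{2}$) becomes a hypergeometric series in $r^2$.

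The crux of the argument — and the step I expect to be the main obstacle — is showing that the supremum of $F(r)$ over $r\in[0,1)$ is attained in the limit $r\to 1^-$ and computing that limit in closed form. This is where the Gamma-function answer $\frac{\Gamma(n+N+\alpha+1)\Gamma(N)}{\Gamma^2(N/2+(n+\alpha+1)/2)}$ must emerge: the $(1-r^2)^N$ prefactor exactly cancels the boundary blow-up of the integral, and the limit is a ratio of Gamma functions coming from Stirling-type asymptotics of the hypergeometric series at $r=1$ (equivalently, from a Gauss summation $\ {}_2F_1(a,b;c;1)=\frac{\Gamma(c)\Gamma(c-a-b)}{\Gamma(c-a)\Gamma(c-b)}$ once the series is identified). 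One must also verify monotonicity (or at least that no interior maximum beats the endpoint), which can be done either by differentiating $F$ or, more cleanly, by a change of variables $w\mapsto\varphi_z(w)$ using the identities (1) and (2) from the preliminaries to rewrite $F(r)$ in a manifestly monotone form. Finally one checks that the constants match the earlier special cases: $N=1$ should recover $\frac{\Gamma(n+\alpha+2)}{\Gamma^2((n+\alpha+2)/2)}$ of Kalaj–Markovi\'c, and $n=1,N=1,\alpha=0$ the value $8/\pi$ of Per\"al\"a.
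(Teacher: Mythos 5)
Your overall strategy is close to the paper's: both reduce the problem to the integral of the absolute value of the differentiated kernel and then evaluate a supremum by hypergeometric analysis at the boundary. Your use of exact $L^\infty$--$L^1$ duality (for fixed $z$ and $m$ the supremum over $\|g\|_\infty\le 1$ is attained by a unimodular $g$) is a nice shortcut compared with the paper, which proves the upper bound via $|h_m|\le 1$ and then recovers it as a lower bound with the test functions $g_{z_r}$ and dominated convergence; with your reduction only the limit in $z$ remains to be controlled. (Minor slip: the extremizer should be $g(w)=\frac{w^m}{|w^m|}\cdot\frac{(1-\langle z,w\rangle)^{n+1+\alpha+N}}{|1-\langle z,w\rangle|^{n+1+\alpha+N}}$, the conjugate of what you wrote, so that $\bar w^m g(w)(1-\langle z,w\rangle)^{-(n+1+\alpha+N)}$ is nonnegative.)

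However, two steps are genuine gaps as written. First, the claim that for fixed $z$ the maximizing multi-index is $(N,0,\dots,0)$ is not justified by $|w^m|\le |w|^N$ and is not needed in that pointwise form; what is true and provable concerns the joint supremum over $z$ and $m$: after the substitution $w=\varphi_z(\omega)$ (identity (1) plus the Jacobian) one gets $(1-|z|^2)^N\int\frac{|w^m|\,dv_\alpha(w)}{|1-\langle z,w\rangle|^{n+1+\alpha+N}}=\int\frac{|\varphi_z(\omega)^m|\,dv_\alpha(\omega)}{|1-\langle z,\omega\rangle|^{n-N+\alpha+1}}\le\int\frac{dv_\alpha(\omega)}{|1-\langle z,\omega\rangle|^{n-N+\alpha+1}}$, with equality in the limit $z=re_1\to e_1$ when $m=(N,0,\dots,0)$, since $\varphi_{e_1}\equiv e_1$. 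Second, your primary computational route (keeping the factor $|w_1|^N$, slicing, expanding the kernel) does not produce a series that Gauss's theorem sums: the diagonal series is a ${}_3F_2\bigl(\gamma,\gamma,1+\tfrac N2;1,1+n+\alpha+\tfrac N2;r^2\bigr)$ with parametric excess $-N<0$, divergent at $r=1$, so "Gauss summation once the series is identified" does not apply; you would instead need coefficient asymptotics (the Stirling/Abelian argument you allude to) and, separately, an argument excluding an interior maximum of $F$. (Also, $|1-rw_1|^{-2\gamma}$ has off-diagonal terms $w_1^k\bar w_1^l$; the diagonal form you wrote is only valid after integration against a rotation-invariant weight.) The clean repair is exactly your parenthetical alternative, which is the paper's route: the M\"obius substitution absorbs the factor $(1-|z|^2)^N$ and turns the majorant into $\frac{\Gamma(n+N+\alpha+1)}{\Gamma(n+\alpha+1)}\,{}_2F_1\bigl(\lambda,\lambda;n+\alpha+1;|z|^2\bigr)$ with $\lambda=\frac{n-N+\alpha+1}{2}$, a power series with nonnegative coefficients, hence increasing in $|z|$, so the supremum is the boundary value, evaluated by Gauss (legitimate here since $c-a-b=N>0$), giving $\frac{\Gamma(n+N+\alpha+1)\Gamma(N)}{\Gamma^{2}(\frac{N}{2}+\frac{n+\alpha+1}{2})}$. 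Until those two points are carried out, the proposal is an outline of the right proof rather than a proof.
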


\begin{theorem}
 Let $P_{\alpha}$ be the Bergman projection $P_{\alpha}:L^{\infty}(\Bbb B)\rightarrow {\cal B},$ where ${\cal B}$ is Bloch space in norm (4).
Then
 $$\| P_{\alpha}\|_{\cal B}= \frac{\Gamma(n+N+\alpha+1)\Gamma(\frac{1+N}{2})}{\Gamma(\frac{1+N}{2}+\alpha+n)}+\frac{\Gamma(n+N+\alpha+1)\Gamma(N)}{\Gamma^{2}(\frac{N}{2}+\frac{n+\alpha+1}{2})},\enspace N\in{\rm{\bf N}}.$$

\end{theorem}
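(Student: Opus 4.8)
The plan is to split the $\cal B$-norm (4) into its two constituent pieces, compute the supremum of each over the unit ball of $L^{\infty}(\Bbb B)$, and then produce a single admissible sequence on which both suprema are approached at the same time.

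\textbf{Reduction.} For $g\in L^{\infty}(\Bbb B)$ with $\|g\|_{\infty}\le1$ the function $P_{\alpha}g$ is holomorphic, so by (4) and (6),
\[
\|P_{\alpha}g\|_{\cal B}=\max_{|m|\le N-1}\Bigl|\tfrac{\partial^{|m|}(P_{\alpha}g)}{\partial z^{m}}(0)\Bigr|+\|P_{\alpha}g\|_{\tilde{\cal B}}.
\]
Passing to suprema and using $\sup(a+b)\le\sup a+\sup b$ together with the previous theorem, one obtains the elementary bound
\[
\|P_{\alpha}\|_{\cal B}\le {\cal A}+\frac{\Gamma(n+N+\alpha+1)\Gamma(N)}{\Gamma^{2}\bigl(\tfrac{N}{2}+\tfrac{n+\alpha+1}{2}\bigr)},\qquad {\cal A}:=\sup_{\|g\|_{\infty}\le1}\ \max_{|m|\le N-1}\Bigl|\tfrac{\partial^{|m|}(P_{\alpha}g)}{\partial z^{m}}(0)\Bigr|.
\]
Two things then remain: to evaluate ${\cal A}$, and to show that this bound is in fact attained.

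\textbf{Computing ${\cal A}$.} Differentiating under the integral sign and using the Taylor expansion ${\cal K}_{\alpha}(z,w)=\sum_{m}\frac{\Gamma(n+1+\alpha+|m|)}{\Gamma(n+1+\alpha)\,m!}\,z^{m}\overline{w}^{m}$, one gets for every multi-index $m$
\[
\frac{\partial^{|m|}(P_{\alpha}g)}{\partial z^{m}}(0)=\frac{\Gamma(n+1+\alpha+|m|)}{\Gamma(n+1+\alpha)}\int_{\Bbb B}\overline{w}^{m}g(w)\,dv_{\alpha}(w),
\]
a bounded linear functional of $g$; hence its modulus over $\|g\|_{\infty}\le1$ has supremum $\frac{\Gamma(n+1+\alpha+|m|)}{\Gamma(n+1+\alpha)}\int_{\Bbb B}|w^{m}|\,dv_{\alpha}(w)$, realised by a unimodular $g$. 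The integral $\int_{\Bbb B}|w^{m}|\,dv_{\alpha}$ is computed in polar coordinates from $\int_{\Bbb S}\prod_{j}|\zeta_{j}|^{m_{j}}\,d\sigma(\zeta)=\frac{\Gamma(n)\prod_{j}\Gamma(m_{j}/2+1)}{\Gamma(n+|m|/2)}$ and a Beta integral in the radial variable, yielding an explicit quotient of Gamma functions. It then remains to maximise over $|m|\le N-1$: since $\log\Gamma$ is convex and $\Gamma$ is increasing on $[1,\infty)$, for fixed $|m|=k$ the product $\prod_{j}\Gamma(m_{j}/2+1)$ is largest when all the weight is in one coordinate, $m=(k,0,\dots,0)$, and a routine monotonicity check shows the resulting quantity increases with $k$; so the maximum is at $m=(N-1,0,\dots,0)$. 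Collecting the Gamma factors (with $c_{\alpha}=\Gamma(n+\alpha+1)/(n!\,\Gamma(\alpha+1))$) identifies ${\cal A}$ with the first summand in the statement.

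\textbf{Sharpness, and where the difficulty lies.} The extremiser of the first piece is the fixed unimodular $g_{0}(w)=w_{1}^{N-1}/|w_{1}|^{N-1}$, whereas $\|P_{\alpha}\|_{\tilde{\cal B}}$ is approached -- just as in the proof of the previous theorem -- by evaluating the weighted kernel at $z=\rho e_{1}$, $\rho\uparrow1$, the corresponding near-extremisers $g_{\rho}$ having their phase matched to $\partial^{(N,0,\dots,0)}_{z}{\cal K}_{\alpha}(\rho e_{1},\cdot)$ and concentrating near $e_{1}$. Since $g_{0}\ne g_{\rho}$, I would splice them: fix a neighbourhood $U$ of $e_{1}$ in $\Bbb B$, put $g$ equal to the $g_{\rho}$-phase on $U$ and to $g_{0}$ on $\Bbb B\setminus U$, so $\|g\|_{\infty}\le1$. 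As $\rho\uparrow1$ the factor $(1-\rho^{2})^{N}|1-\rho\overline{w}_{1}|^{-(n+1+\alpha+N)}$ tends to $0$ uniformly for $w\notin U$, so the ``off-$U$'' contribution to $(1-\rho^{2})^{N}\partial^{(N,0,\dots,0)}_{z}(P_{\alpha}g)(\rho e_{1})$ vanishes in the limit, while the ``on-$U$'' contribution still carries the whole limiting mass of $\int_{\Bbb B}(1-\rho^{2})^{N}|w_{1}|^{N}|1-\rho\overline{w}_{1}|^{-(n+1+\alpha+N)}(1-|w|^{2})^{\alpha}\,dv(w)$; hence the $\tilde{\cal B}$-part of $\|P_{\alpha}g\|_{\cal B}$ tends to $\|P_{\alpha}\|_{\tilde{\cal B}}$. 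Simultaneously, because $g=g_{0}$ off $U$ and $v_{\alpha}(U)$ can be taken arbitrarily small, $\int_{\Bbb B}\overline{w}_{1}^{N-1}g\,dv_{\alpha}$ differs from $\int_{\Bbb B}|w_{1}|^{N-1}\,dv_{\alpha}$ by $o(1)$, so the point-evaluation part tends to ${\cal A}$. Shrinking $U$ after letting $\rho\uparrow1$ gives $\|P_{\alpha}\|_{\cal B}\ge {\cal A}+\|P_{\alpha}\|_{\tilde{\cal B}}$, which with the Reduction step closes the argument. The step I expect to be the main obstacle is exactly this concentration claim: one must show quantitatively that, as $\rho\uparrow1$, almost all the mass of the above $\rho$-dependent integral lies inside any prescribed neighbourhood of $e_{1}$, so that the competing requirements -- maximal point evaluation at the origin versus maximal boundary growth -- can be met by one and the same $g$.
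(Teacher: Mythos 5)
Your argument is essentially the paper's: the same splitting of the norm (4) into the point-evaluation part at $0$ and the semi-norm part for the upper bound, the evaluation of the point-evaluation functional via the moment formula of Lemma 6 with the maximum over $|m|\le N-1$ attained at $m=(N-1,0,\dots,0)$, and for sharpness the same splicing device --- a unimodular function whose phase matches the differentiated kernel near the concentration point $e_1$ and equals $w_1^{N-1}/|w_1|^{N-1}$ elsewhere, with the limits taken in the same order ($\rho\to1$ first, then shrinking the exceptional region); the paper splices along the spheres $|w|=\delta^2$ and $|w|=\delta$ rather than along a neighbourhood of $e_1$, which is immaterial. One caveat: if you actually collect your Gamma factors, the order-$(N-1)$ differentiation of the kernel at $0$ contributes $\Gamma(n+\alpha+N)/\Gamma(n+\alpha+1)$, so your ${\cal A}$ comes out as $\Gamma(n+N+\alpha)\,\Gamma(\tfrac{N+1}{2})/\Gamma(\tfrac{N+1}{2}+n+\alpha)$, which is not the first summand as printed (that has $\Gamma(n+N+\alpha+1)$); the same off-by-one appears in the paper's displays (27) and (30), and the test case $N=1$, $n=1$, $\alpha=0$ (where the point-evaluation part is $|P_\alpha g(0)|\le 1$, giving $1+\tfrac{8}{\pi}$ in accordance with Per\"al\"a's later result) indicates that the corrected constant, not the printed one, is correct. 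So the method is sound and identical in substance to the paper's, but the step ``identifies ${\cal A}$ with the first summand in the statement'' should be verified explicitly rather than asserted.
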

Let us notice that  when $N=1$  for the ${\tilde{\cal B}}$-norm of the Bergman projection
   we have $\| P_{\alpha}\|_{\tilde{\cal B}}=\frac{\Gamma(n+\alpha+2)}{\Gamma^{2}(\frac{n+\alpha+2}{2})}$ and this is one of the main results in \cite{Kalaj}. For the special case $n=1$, we obtain $\| P\|_{\tilde{\cal B}}=\frac{8}{\pi},$  which coincides with the main result of Per\"al\"a in \cite{Ant}.

\section{Proof of Theorem 4 and Theorem 5 }
Before we start to prove Theorem 4, let us state \cite[Lemma~3.3]{Kalaj}.
\begin{lemma}
For a multi-index $m=(m_1,...,m_n)\in {\rm{\bf N}_{0}^{n}}$ we have
\begin{equation}
\int_{S}|\zeta^{m}|d\sigma(\zeta)=\frac{(n-1)!\prod_{i=1}^{n}\Gamma(1+\frac{m_i}{2})}{\Gamma(n+\frac{|m|}{2})}
\end{equation}
and
\begin{equation}
\int_{\Bbb B}|z^{m}|dv_{\alpha}(z)=\frac{\Gamma(1+\alpha+n)}{\Gamma(1+\alpha+n+\frac{|m|}{2})}\prod_{i=1}^{n}\Gamma(1+\frac{m_i}{2})
\end{equation}
Here $w^{m}:=\prod_{i=1}^{n}w_{i}^{m_i},\enspace\mbox{and}\enspace |m|=\sum_{i=1}^{n}m_i.$
\end{lemma}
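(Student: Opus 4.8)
The plan is to reduce both identities to a single Gaussian integral over $\Bbb C^{n}$, evaluated in two coordinate systems, and then pass from the sphere to the ball by a polar decomposition.

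I would start with the one–variable computation $\int_{\Bbb C}|w|^{k}e^{-|w|^{2}}\,dA(w)=2\pi\int_{0}^{\infty}\rho^{k+1}e^{-\rho^{2}}\,d\rho=\pi\,\Gamma(1+\tfrac{k}{2})$ (polar coordinates on $\Bbb C\cong\Bbb R^{2}$ and the substitution $t=\rho^{2}$). Since $e^{-|z|^{2}}=\prod_{i}e^{-|z_{i}|^{2}}$ and $|z^{m}|=\prod_{i}|z_{i}|^{m_{i}}$ factor over coordinates, and Lebesgue measure $dV$ on $\Bbb C^{n}\cong\Bbb R^{2n}$ is a product,
$$\int_{\Bbb C^{n}}|z^{m}|\,e^{-|z|^{2}}\,dV(z)=\pi^{n}\prod_{i=1}^{n}\Gamma\!\Bigl(1+\frac{m_{i}}{2}\Bigr).$$
On the other hand, writing $z=r\zeta$ with $r=|z|\ge 0$, $\zeta\in\Bbb S$, one has $dV=r^{2n-1}\,dr\,dS(\zeta)$ for the unnormalised surface measure $dS$ on $\Bbb S$, so the same integral equals $\bigl(\int_{0}^{\infty}r^{|m|+2n-1}e^{-r^{2}}\,dr\bigr)\bigl(\int_{\Bbb S}|\zeta^{m}|\,dS(\zeta)\bigr)$, and the radial factor is $\tfrac12\Gamma(n+\tfrac{|m|}{2})$ after $t=r^{2}$. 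Dividing the resulting identity by the one obtained for $m=0$ cancels $\pi^{n}$ and $\int_{\Bbb S}dS$; since $d\sigma=dS/\!\int_{\Bbb S}dS$ this yields
$$\int_{\Bbb S}|\zeta^{m}|\,d\sigma(\zeta)=\frac{\Gamma(n)\prod_{i}\Gamma(1+\frac{m_{i}}{2})}{\Gamma(n+\frac{|m|}{2})}=\frac{(n-1)!\,\prod_{i=1}^{n}\Gamma(1+\frac{m_{i}}{2})}{\Gamma(n+\frac{|m|}{2})},$$
which is the first identity.

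For the second identity I would use the polar decomposition of $dv_{\alpha}$. Because $dv_{\alpha}=c_{\alpha}(1-|z|^{2})^{\alpha}\,dv$ with $dv$ normalised so $v(\Bbb B)=1$, rotation invariance gives
$$\int_{\Bbb B}f\,dv_{\alpha}=2n\,c_{\alpha}\int_{0}^{1}r^{2n-1}(1-r^{2})^{\alpha}\Bigl(\int_{\Bbb S}f(r\zeta)\,d\sigma(\zeta)\Bigr)dr,$$
where the constant $2n\,c_{\alpha}$ is pinned down by $v_{\alpha}(\Bbb B)=1$, i.e. by $2n\,c_{\alpha}\cdot\tfrac12 B(n,\alpha+1)=1$. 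Taking $f(z)=|z^{m}|=r^{|m|}|\zeta^{m}|$ and inserting the first identity, the radial integral becomes $\tfrac12 B\bigl(n+\tfrac{|m|}{2},\alpha+1\bigr)=\tfrac12\Gamma(n+\tfrac{|m|}{2})\Gamma(\alpha+1)/\Gamma(n+\tfrac{|m|}{2}+\alpha+1)$; the factors $\Gamma(n+\tfrac{|m|}{2})$ cancel, $2n\,c_{\alpha}\cdot\tfrac12\cdot(n-1)!=n!\,c_{\alpha}$, and $c_{\alpha}=\Gamma(n+\alpha+1)/(n!\,\Gamma(\alpha+1))$ gives $n!\,c_{\alpha}\,\Gamma(\alpha+1)=\Gamma(n+\alpha+1)$, so
$$\int_{\Bbb B}|z^{m}|\,dv_{\alpha}(z)=\frac{\Gamma(1+\alpha+n)}{\Gamma(1+\alpha+n+\frac{|m|}{2})}\prod_{i=1}^{n}\Gamma\!\Bigl(1+\frac{m_{i}}{2}\Bigr).$$

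There is no genuine obstacle — this is a classical polar-coordinate computation — and the only point requiring attention is the normalisation: since $dv$ and $\sigma$ both have total mass $1$, the shell density in the polar decomposition of $dv_{\alpha}$ carries the explicit constant $2n\,c_{\alpha}$, which is safest verified by checking the case $m=0$ (it reduces to $v_{\alpha}(\Bbb B)=\sigma(\Bbb S)=1$). Everything else is the Euler integrals $\int_{0}^{\infty}t^{s-1}e^{-t}\,dt=\Gamma(s)$ and $\int_{0}^{1}t^{a-1}(1-t)^{b-1}\,dt=\Gamma(a)\Gamma(b)/\Gamma(a+b)$.
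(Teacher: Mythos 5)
Your proof is correct: both the Gaussian factorization over coordinates (giving $\pi^{n}\prod_{i}\Gamma(1+\tfrac{m_i}{2})$), the radial factor $\tfrac12\Gamma(n+\tfrac{|m|}{2})$, and the normalisation bookkeeping ($2nc_{\alpha}\cdot\tfrac12 B(n,\alpha+1)=1$, $n!\,c_{\alpha}\Gamma(\alpha+1)=\Gamma(n+\alpha+1)$) all check out, and the two stated formulas follow exactly as you derive them. Note, however, that the paper itself does not prove this lemma at all: it simply quotes it as Lemma~3.3 of the cited Kalaj--Markovi\'c paper (the \emph{proof} environment that follows the lemma in the source is already the proof of Theorem~4). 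So your contribution is a self-contained derivation of the imported result, by the classical route (Gaussian integral computed in Cartesian and polar coordinates to get the sphere integral, then the slice/polar decomposition of $dv_{\alpha}$ with the Beta integral for the radial part); this is essentially the standard argument found in Rudin's and Zhu's books, and it is a perfectly adequate replacement for the citation. The only point worth stating explicitly if you write it up is the polar-coordinates formula $\int_{\Bbb B}f\,dv=2n\int_{0}^{1}r^{2n-1}\int_{\Bbb S}f(r\zeta)\,d\sigma(\zeta)\,dr$ for the normalised measures, which you implicitly use and correctly pin down via the $m=0$ case.
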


\begin{proof}

Let $P$ be the Bergman projection, $P:L^{\infty}(\Bbb B)\rightarrow {\cal B}.$ Since $P$ is onto, for any $f\in {\cal B}$ there is $g\in L^{\infty}(\Bbb B)$ such that $f=Pg,$ i.e.

\begin{equation}
f(z)=\int_{\Bbb B}\frac{g(w)}{(1-\left<z,w\right>)^{n+1+\alpha}}dv_{\alpha}(w),z\in \Bbb B.
\end{equation}
Differenting under integral sign in (10) we obtain
\begin{equation}\label{pon1}
\begin{split}
&\|P_{\alpha}g\|_{\tilde{\cal B}}=\max_{|m|=N}\sup_{z\in \Bbb B}{(1-|z|^2)^N}\left|\frac{\partial^N f(z)}{\partial z^m}\right|\\
\leq\| g\|_{\infty}&\frac{\Gamma(n+N+\alpha+1)}{\Gamma(n+\alpha+1)}\max_{|m|=N} \sup_{z\in B_n}{(1-|z|^2)^N}
  \int_{B_n}\frac{|h_{m}(w)|}{|1-\left<z,w\right>|^{n+1+N+\alpha}}dv_{\alpha}(w).
\end{split}
\end{equation}
Thus, we have

\begin{equation}\label{pon2}
\|P_{\alpha}\|_{\tilde{\cal B}}\leq  \frac{\Gamma(n+N+\alpha+1)}{\Gamma(n+\alpha+1)}\max_{|m|=N} \sup_{z\in\Bbb B}{(1-|z|^2)^N}
  \int_{B_n}\frac{|h_{m}(w)|}{|1-\left<z,w\right>|^{n+1+N+\alpha}}dv_{\alpha}(w),
\end{equation}

  where $h_m(\bar{w})={\bar {w}}^m=({\bar w}_1)^{m_1}...({\bar w}_n)^{m_n},\sum m_n=N.$

For a fixed $z\in \Bbb B$ let us make the change of variable $w=\varphi_{z}(\omega).$
By using the following formula for the real Jacobian $$(J_{R}\varphi_z)(\omega)=\left(\frac{1-|z|^2}{|1-\left<z,\omega\right>|^{2}}\right)^{n+1},$$
 and identity (1) we obtain
\begin{equation}\label{pon3}
\begin{split}
 &dv_{\alpha}(w)=c_{\alpha}(1-|w|^2)^{\alpha}dv(w)=c_{\alpha}\left(\frac{1-|z|^2}{|1-\left<z,\omega\right>|^2}\right)^{n+1}\frac{(1-|z|^2)^{\alpha}(1-|\omega|^2)^{\alpha}}{|1-\left<z,\omega\right>|^{2\alpha}}dv(\omega)
\end{split}
\end{equation}
By plugging \eqref{pon3} in \eqref{pon2}  we obtain
\begin{equation}
\begin{split}
\|P_{\alpha}\|_{\tilde{\cal B}}&\leq \frac{\Gamma(n+N+\alpha+1)}{\Gamma(n+\alpha+1)}\max_{|m|=N}\sup_{z\in \Bbb B}{(1-|z|^2)^N}\int_{\Bbb B}\frac{|h_{m}(w)|}{|1-\left<z,w\right>|^{n+N+\alpha+1}}dv_{\alpha}(w)\\
&=\frac{\Gamma(n+N+\alpha+1)}{\Gamma(n+\alpha+1)}\max_{|m|=N}\sup_{z\in \Bbb B}\int_{\Bbb B}\frac{|h_{m}(\varphi_{z}(\omega))|}{|1-\left<z,\omega\right>|^{n-N+\alpha+1}}dv_{\alpha}(\omega).
\end{split}
\end{equation}
Furthermore
\begin{equation}
\begin{split}
&\|P_{\alpha}\|_{\tilde{\cal B}}\leq \frac{\Gamma(n+N+\alpha+1)}{\Gamma(n+\alpha+1)}\max_{|m|=N}\sup_{z\in \Bbb B}\int_{\Bbb B}\frac{|h_{m}(\varphi_{z}(\omega))|}{|1-\left<z,\omega\right>|^{n-N+\alpha+1}}dv_{\alpha}(\omega)\\
&\leq \frac{\Gamma(n+N+\alpha+1)}{\Gamma(n+\alpha+1)}\max_{|m|=N}\left(\sup_{\omega\in \Bbb B}|h_{m}(\omega)|\right)\sup_{z\in \Bbb B}\int_{\Bbb B}\frac{1}{|1-\left<z,\omega\right>|^{n-N+\alpha+1}}dv_{\alpha}(\omega).
\end{split}
\end{equation}
Further, let us note that for every polynomial $h_{m},$ $|h_{m}(\omega)|\leq 1.$  The maximal value is attained, for example when $h_{(N,0,...,0)}(\omega)=h_{1}(\omega)=\omega_{1}^{N}$ and $\omega=e_1.$
So we conclude
\begin{equation}
\|P_{\alpha}\|_{\tilde{\cal B}}\leq \frac{\Gamma(n+N+\alpha+1)}{\Gamma(n+\alpha+1)}\sup_{z\in \Bbb B}\int_{\Bbb B}\frac{1}{|1-\left<z,\omega\right>|^{n-N+\alpha+1}}dv_{\alpha}(\omega).
\end{equation}

Our next goal is to determine maximum of the function $m(z),$ where
 $$m(z)=\frac{\Gamma(n+N+\alpha+1)}{\Gamma(n+\alpha+1)}\int_{\Bbb B}\frac{dv_{\alpha}(\omega)}{|1-\left<z,\omega\right>|^{n-N+\alpha+1}},\enspace z\in \Bbb B.$$

By using the uniform convergence, the fact that $\left<z,\omega\right>^{k_1}$ and $\left<z,\omega\right>^{k_2}\enspace(k_1,k_2\in {\rm{\bf N}},k_1\neq k_2)$
 are orthogonal in $L^{2}(\Bbb B, dv_{\alpha}(\omega)),$ and polar coordinates we obtain the following sequence of equalities
\begin{equation}
\begin{split}
& m(z)= \frac{\Gamma(n+N+\alpha+1)}{\Gamma(n+\alpha+1)}\int_{\Bbb B}\frac{dv_{\alpha}(\omega)}{|1-\left<\zeta,\omega\right>|^{n-N+\alpha+1}}dv_{\alpha}(\omega)\\
&=\frac{\Gamma(n+N+\alpha+1)}{\Gamma(n+\alpha+1)} \sum_{k=0}^{\infty}\left|\frac{\Gamma(k+\lambda)}{k!\Gamma(\lambda)}\right|^{2}\int_{\Bbb B}|\left<z,\omega\right>|^{2k}dv_{\alpha}(\omega)\\
&=\frac{2n\Gamma(n+N+\alpha+1)}{n!\Gamma(\alpha+1)} \sum_{k=0}^{\infty}\left|\frac{\Gamma(k+\lambda)}{k!\Gamma(\lambda)}\right|^{2}\int_{0}^{1}r^{2n+2k-1}(1-r^2)^{\alpha}dr\int_{S}|\left<z,\xi\right>|^{2k}d\sigma(\xi)
\end{split}
\end{equation}

where $\lambda=\frac{n-N+\alpha+1}{2}$ and $\omega=r\xi,|\xi|=1.$

 The unitary matrix  $U,\enspace U\xi=\xi'(\xi'=(\xi_{1}',...,\xi_{n}'), \xi_{1}'=\frac{\left<\xi,z\right>}{|z|})$ constructed in Zhu (see~\cite[P.~15]{Zhu}) applied on the last surface integral gives
\begin{equation}
m(z)= \frac{\Gamma(n+N+\alpha+1)}{n!\Gamma(\alpha+1)}\sum_{k=0}^{\infty}\left|\frac{\Gamma(k+\lambda)}{k!\Gamma(\lambda)}\right|^{2}\frac{n\Gamma(n+k)\Gamma(\alpha+1)}{\Gamma(n+k+\alpha+1)}\int_{S} |\xi_{1}'|^{2k}d\sigma(\xi')|z|^{2k}.
\end{equation}
Finally, by Lemma 6 we have

\begin{equation}
\begin{split}
m(z)&=\frac{\Gamma(n+N+\alpha+1)}{\Gamma^2(\lambda)}\sum_{k=0}^{\infty}\frac{\Gamma^2(k+\lambda)}{k!\Gamma(n+k+\alpha+1)}|z|^{2k}\\
&=\frac{\Gamma(n+N+\alpha+1)}{\Gamma(n+\alpha+1)}{}_2F_{1}(\lambda; \lambda;n+\alpha+1,|z|^2),
\end{split}
\end{equation}
where ${}_2F_{1}(\lambda; \lambda;n+\alpha+1,|z|^2)$ is the hypergeometric function, i.e. in general
$${}_2F_{1}(a;b;c,x)=\sum_{n=0}^{\infty}\frac{(a)_{n}(b)_{n}}{n!(c)_{n}}x^n$$
where $(a)_{n}=a(a+1)\cdot\cdot\cdot(a+n-1)$ is the Pochhammer symbol (see~\cite{Georg}). By using the formula
$$\frac{d}{dx}{}_2F_{1}(a,b;c;x)=\frac{ab}{c}{}_2F_{1}(a+1,b+1;c+1;x),$$
we conclude that the maximum of ${}_2F_{1}(\lambda; \lambda;n+\alpha+1,|z|^2)$ is ${}_2F_{1}(\lambda; \lambda;n+\alpha+1,1).$ So
\begin{equation}
\begin{split}
\sup_{z\in \Bbb B}m(z)&=\frac{\Gamma(n+N+\alpha+1)}{\Gamma(n+\alpha+1)}{}_2F_{1}(\lambda; \lambda;n+\alpha+1,1)\\
&=\frac{\Gamma(n+N+\alpha+1)}{\Gamma(n+\alpha+1)}\frac{\Gamma(n+\alpha+1)\Gamma(N)}{\Gamma^2(\frac{N}{2}+\frac{n+\alpha+1}{2})},
\end{split}
\end{equation}
i.e.
\begin{equation}
\|P_{\alpha}\|_{\tilde{\cal B}}\le\frac{\Gamma(n+N+\alpha+1)\Gamma(N)}{\Gamma^{2}(\frac{N}{2}+\frac{n+\alpha+1}{2})},\enspace N\in{\rm{\bf N}}
\end{equation}
In the relation (20) we used the Gauss identity for hypergeometric functions. Namely, for ${\rm Re}(c-a-b)>0,$ we have
$$ {}_2F_{1}(a;b;c,1)=\frac{\Gamma(c)\Gamma(c-a-b)}{\Gamma(c-a)\Gamma(c-b)}.$$
Let us prove the opposite inequality. Since the function $|h_{m}(\omega)|$ is subharmonic in $\Bbb B,$ there exists $\zeta_{0}\in S$  such that
$$\max_{|\zeta|=1}|h_{m}(\zeta)|=|h_{m}(\zeta_{0})|.$$
As we already pointed out if $h_{k}(\omega)=\omega_{k}^{N}$ and $\zeta_{0}=e_{k}$, $(h_{k}(\omega)=h_{(0,..,N,..,0)}(w))$, then $|h_{k}(\zeta_{0})|=1.$
We fix $z_{r}=r\zeta_{0},$ and the function $g_{z_r}(w)=\frac{(1-\left<z_r,w\right>)^{n+N+\alpha+1}}{|1-\left<z_r,w\right>|^{n+N+\alpha+1}}.$ It is clear that $\| g_{z_r}\|_{\infty}=1.$
Then
\begin{equation}
\begin{split}
\| P_{\alpha}g_{z_{r}}\|_{\tilde{\cal B}}&=\frac{\Gamma(N+n+\alpha+1)}{\Gamma(n+\alpha+1)}\max_{|m|=N}\sup_{z\in\Bbb B}(1-|z|^2)^{N}\left|\int_{\Bbb B}\frac{g_{z_{r}}(w)h_{m}(w)dv_{\alpha}(w)}{(1-\left<z,w\right>)^{n+N+\alpha+1}}\right|\\
&\geq\frac{\Gamma(N+n+\alpha+1)}{\Gamma(n+\alpha+1)}\max_{|m|=N}(1-|z_{r}|^2)^{N}\left|\int_{\Bbb B}\frac{h_{m}(w)dv_{\alpha}(w)}{|1-\left<z_{r},w\right>|^{n+N+\alpha+1}}\right|.
\end{split}
\end{equation}
By using the change of variable, $w\rightarrow \varphi_{z_{r}}(\omega),$ as in the previous case we have
\begin{equation}
\| P_{\alpha}g_{z_{r}}\|_{\tilde{\cal B}}\geq
\frac{\Gamma(N+n+\alpha+1)}{\Gamma(n+\alpha+1)}\max_{|m|=N}\left|\int_{\Bbb B}\frac{h_{m}(\varphi_{z_{r}}(w))dv_{\alpha}(w)}{|1-\left<z_{r},w\right>|^{n-N+\alpha+1}}\right|.
\end{equation}
Since
 $$\left|\int_{\Bbb B}\frac{h_{m}(\varphi_{z_{r}}(\omega))dv_{\alpha}(\omega)}{|1-\left<z_{r},w\right>|^{n-N+\alpha+1}}\right|\leq \int_{\Bbb B}\frac{dv_{\alpha}(\omega)}{|1-\left<z_{r},\omega\right>|^{n-N+\alpha+1}}<\infty,$$ we can apply the Lebesgue dominated convergence theorem in order to obtain
 \begin{equation}
 \begin{split}
 \| P_{\alpha}\|_{\tilde{\cal B}}&\geq\lim_{r\rightarrow 1^{-}}\frac{\Gamma(N+n+\alpha+1)}{\Gamma(n+\alpha+1)}\max_{|m|=N}\left|\int_{\Bbb B}\frac{h_{m}(\varphi_{z_{r}}(w))dv_{\alpha}(w)}{|1-\left<z_{r},w\right>|^{n-N+\alpha+1}}\right|\\
 &=\frac{\Gamma(N+n+\alpha+1)}{\Gamma(n+\alpha+1)}\max_{|m|=N}\left|\int_{\Bbb B}\frac{h_{m}(\zeta_{0})dv_{\alpha}(w)}{|1-\left<\zeta_{r},w\right>|^{n-N+\alpha+1}}\right|.
 \end{split}
 \end{equation}
We used in (24) that $\varphi_{\zeta_{0}}(w)=\zeta_{0}$ when $|\zeta_{0}|=1.$ Finally, from (24) we obtain

\begin{equation}
\begin{split}
\|P_{\alpha}\|_{\tilde{\cal B}}&\geq\frac{\Gamma(N+n+\alpha+1)}{\Gamma(n+\alpha+1)}|h_{m}(\zeta_{0})|\left|\int_{\Bbb B}\frac{dv_{\alpha}(w)}{|1-\left<\xi,w\right>|^{n-N+\alpha+1}}\right|\\
&=\frac{\Gamma(n+N+\alpha+1)\Gamma(N)}{\Gamma^{2}(\frac{N}{2}+\frac{n+\alpha+1}{2})}.
\end{split}
\end{equation}

\end{proof}
Now we prove Theorem 5.
\begin{proof}
We use the same notation as in the proof of Theorem 4. Let $f(z)=P_{\alpha}(g)(z), z\in \Bbb B,$ where $g\in L^{\infty}(\Bbb B),f\in{\cal B}.$\\
Then
\begin{equation}
\begin{split}
\|P_{\alpha}g\|_{\cal B}&= \max_{|m|\leq N-1}{\left|\frac{\partial^{|m|}f}{\partial z^{m}}(0)\right|} +\max_{|m|=N}{\sup_{z\in {\Bbb B}}(1-|z|^2)^{N}\left|\frac{\partial^{N}f}{\partial z^{m}}(z)\right|}\\
&\leq \|g\|_{\infty}\max_{|m|\leq N-1}\int_{\Bbb B}|h_{m}(w)|dv_{\alpha}(w)+\|g\|_{\infty}\|P\|_{\tilde{\cal B}},
\end{split}
\end{equation}
i.e.,
$$\|P\|_{\cal B}\leq \max_{|m|\leq N-1}\int_{\Bbb B}|h_{m}(w)|dv_{\alpha}(w)+\|P\|_{\tilde{\cal B}}.$$
By using Lemma 6 and the polar coordinates, we obtain
\begin{equation}
\begin{split}
\|P\|_{\cal B}&\leq \max_{|m|\leq N-1}\frac{\Gamma(|m|+n+\alpha+1)}{\Gamma(\frac{|m|}{2}+\alpha+n+1)}\prod_{j=1}^{n}\Gamma(1+\frac{m_{j}}{2})+\|P\|_{\tilde{\cal B}}\\
&=\frac{\Gamma(n+N+\alpha+1)\Gamma(\frac{1+N}{2})}{\Gamma(\frac{1+N}{2}+\alpha+n)}+\frac{\Gamma(n+N+\alpha+1)\Gamma(N)}{\Gamma^{2}(\frac{N}{2}+\frac{n+\alpha+1}{2})}.
\end{split}
\end{equation}
In order to prove the opposite inequality we make use of the functions  $$g_{z_{r}}(w)=\frac{(1-\left<z_r,w\right>)^{n+N+\alpha+1}}{|1-\left<z_r,w\right>|^{n+N+\alpha+1}},\enspace w\in \Bbb B$$
 which we used in the proof of Theorem~4 to maximize $\|P_{\alpha}f\|_{\tilde{\cal B}}.$ We define new test functions $g_{z_r}^{\delta}$ with $\|g_{z_r}^{\delta}\|_{\infty}\leq 1$ as follows:

  $$g_{z_r}^{\epsilon}(w)=\left\{\begin{array}{rl}
                                     g_{z_r}(w), &|w|\geq \delta \\
              \frac{w_{1}^{N-1}}{|w_{1}|^{N-1}}, &|w|\leq {\delta}^{2}
                     \end{array}
                     \right.$$\\

 and define $g_{z_r}^{\delta}$ on $\{{\delta}^2<|w|<\delta\}$ so that $g_{z_r}^{\delta}$ is continuous on ${\Bbb B}.$   \\                 We claim that $$(1-|z_r|^2)^{N}\max_{|m|=N}{\left|\frac{\partial^{N}Pg_{z_r}^{\delta}}{z^{m}}(z_r)\right|}\rightarrow \|P\|_{\tilde{\cal B}},\enspace\mbox{as}\enspace r\rightarrow 1^{-}.$$
 Namely, it is clear by the definition of the semi-norm $\|\cdot\|_{\cal B}$ that
$$\limsup_{r\rightarrow 1^{-}}(1-|z_r|^2)^{N}\max_{|m|=N}{\left|\frac{\partial^{N}Pg_{z_r}^{\delta}}{z^{m}}(z_r)\right|}\leq \|P\|_{\tilde{\cal B}}.$$
Also, we have shown in the proof of  Theorem~4 that
$$\lim_{r\rightarrow 1^{-}}(1-|z_r|^2)^{N}\left|\frac{\partial^{N}P_{\alpha}g_{z_r}}{z^m}(z_r)\right|=\|P_{\alpha}\|_{\tilde{\cal B}}.$$
Since $|g_{z_r}(w)-g_{z_r}^{\delta}(w)|\leq 2$ on $\Bbb B$ and $|g_{z_r}(w)-g_{z_r}^{\delta}(w)|=0$ when $|w|>\delta,$ we have
\begin{equation}
\begin{split}
(1-|z_r|^2)^{N}&\max_{|m|=N}\left|\frac{\partial^{N}P_{\alpha}g_{z_r}}{z^{m}}(z_r)-\frac{\partial^{N}P_{\alpha}g_{z_r}^{\delta}}{z^{m}}(z_r)\right|\\
 &=(1-|z_r|^2)^{N}\max_{|m|=N}{\left|\frac{\partial^{N}P_{\alpha}(g_{z_r}-g_{z_r}^{\delta})}{z^{m}}(z_r)\right|}\\
&\leq\frac{\Gamma(n+N+\alpha+1)}{\Gamma(n+\alpha+1)}\int_{|w|<\delta}\frac{2(1-|z_r|^2)^{N}dv_{\alpha}(w)}{|1-\left<z_{r},w\right>|^{n+N+\alpha+1}}.
\end{split}
\end{equation}
The right hand side in (28) goes to 0 as $r\rightarrow 1^{-}.$\\
 Thus
\begin{equation}
\begin{split}
\lim_{r\rightarrow 1^{-}}(1-|z_r|^2)^{N}&\max_{|m|=N}{\left|\frac{\partial^{N}P_{\alpha}g_{z_r}^{\delta}}{z^{m}}(z_r)\right|}\\
&=\lim_{r\rightarrow 1^{-}}(1-|z_r|^2)^{N}\max_{|m|=N}{\left|\frac{\partial^{N}P_{\alpha}g_{z_r}}{z^{m}}(z_r)\right|}\\
&=\|P_{\alpha}\|_{\tilde{\cal B}}.
\end{split}
\end{equation}
Furthermore, for every $r\in(0,1),$ we have
\begin{equation}
\begin{split}
&\left|\frac{\partial^{N-1}P(g_{z_r}^{\delta})}{z_{1}^{N-1}}(0)\right|
\geq\int_{|w|\leq \delta^{2}}|w_1|^{N-1}dv_{\alpha}(w)-\int_{|w|>\delta^2}dv_{\alpha}\\
&\rightarrow\int_{\Bbb B}|w_1|^{N-1}dv_{\alpha}(w)=\frac{\Gamma(n+N+\alpha+1)\Gamma(\frac{1+N}{2})}{\Gamma(\frac{1+N}{2}+\alpha+n)}
\end{split}
\end{equation}
as $\delta\rightarrow 1^{-}.$ It is clear that in (30) we might observe any partial derivative $\frac{\partial^{N-1}P(g_{z_r}^{\delta})}{z_{k}^{N-1}}(0),$ where $k=1,...,n.$ \\
For given $\epsilon>0,$ we may pick $\delta >0$ such that
$$\left|\frac{\partial^{N-1}Pg_{z_r}^{\delta}}{z_{1}^{N-1}}(0)\right|>\frac{\Gamma(n+N+\alpha+1)\Gamma(\frac{1+N}{2})}{\Gamma(\frac{1+N}{2}+\alpha+n)}-\frac{\epsilon}{2},$$
 for every $r\in (0,1).$ We fix such $\delta.$ According to the relation (29), one can pick $r\in (0,1)$ such that
 $$(1-|z_r|^2)^{N}\max_{|m|=N}{\left|\frac{\partial^{N}P_{\alpha}g_{z_r}^{\delta}}{z^{m}}(z_r)\right|}>\|P_{\alpha}\|_{\tilde{\cal B}}-\frac{\epsilon}{2}.$$

 Then, we can end up with a function $g_{z_{r}}^{\delta}$ such that
\begin{equation}
\begin{split}
\|P_{\alpha}\|_{\cal B}&\geq\|P_{\alpha}g_{z_r}^{\delta}\|_{\cal B}\\
&\geq\left|\frac{\partial^{N-1}P(g_{z_r}^{\delta})}{z_{1}^{N-1}}(0)\right|+(1-|z_r|^2)^{N}\max_{|m|=N}{\left|\frac{\partial^{N}P_{\alpha}g_{z_r}^{\delta}}{z^{m}}(z_r)\right|}\\
&>\frac{\Gamma(n+N+\alpha+1)\Gamma(\frac{1+N}{2})}{\Gamma(\frac{1+N}{2}+\alpha+n)}+\|P_{\alpha}\|_{\tilde{\cal B}}-\epsilon.
\end{split}
\end{equation}
Therefore, $\|P_{\alpha}\|_{\cal B}\geq\frac{\Gamma(n+N+\alpha+1)\Gamma(\frac{1+N}{2})}{\Gamma(\frac{1+N}{2}+\alpha+n)}+\|P_{\alpha}\|_{\tilde{\cal B}},$ and combining with relation (27) proves the theorem.
\end{proof}

\begin{remark}
If  $P_{\alpha}$ is Bergman projection, $P_{\alpha}:L^{\infty}(\Bbb B)\rightarrow {\cal B},$ where ${\cal B}$ is Bloch space in the norm (4),
then it is easy to find the lower estimate for the $\cal B$-norm of $P_{\alpha}$ i.e.
$$\| P_{\alpha}\|_{\cal B}\geq \frac{\Gamma(N+n+\alpha+1)}{\Gamma(n+\alpha+1)}.$$

Namely, we fix $z_{0}\in\Bbb  B$ and we make use of the function $g_{z_0}(w)=\frac{(1-\left<z_{0},w\right>)^N}{(1-\left<w,z_{0}\right>)^N}.$ It is clear that
$g_{z_{0}}\in L^{\infty},\enspace \| g_{z_{0}}\|=1.$

Hence
\begin{equation}
\begin{split}
\| P_{\alpha}g_{z_{0}}\|_{\cal B}&=\frac{\Gamma(N+n+\alpha+1)}{\Gamma(n+\alpha+1)}\max_{|m|=N}\sup_{z\in \Bbb B}(1-|z|^2)^{N}\left|\int_{\Bbb B}\frac{g_{z_{0}}(w)h_{m}(w)}{(1-\left<z,w\right>)^{n+N+\alpha+1}}dv_{\alpha}(w)\right|\\
&\geq\frac{\Gamma(N+n+\alpha+1)}{\Gamma(n+\alpha+1)} \max_{|m|=N} (1-|z_{0}|^2)^{N}\left|\int_{\Bbb B}\frac{\frac{h_{m}(w)}{(1-\left<w,z_{0}\right>)^N}}{(1-\left<z_{0},w\right>)^{n+\alpha+1}}dv_{\alpha}(w)\right|.
\end{split}
\end{equation}
 On the other hand, it holds $\frac{h_{m}(w)}{(1-\left<w,z_{0}\right>)^N}\in H^{\infty}(\Bbb B),$ and this implies

\begin{equation}
\begin{split}
\| P_{\alpha}\|_{\cal B}&\geq \frac{\Gamma(N+n+\alpha+1)}{\Gamma(n+\alpha+1)}\max_{|m|=N}\sup_{z\in \Bbb B} |h_{m}(z)|\\
&=\frac{\Gamma(N+n+\alpha+1)}{\Gamma(n+\alpha+1)}.
\end{split}
\end{equation}

\end{remark}

\begin{remark}
We want to emphasize that on the Bloch space $\cal B$ we may observe the norm
\begin{equation}
\|f\|_{\cal B_{p}}=\sum_{|m|\leq N-1}\left|\frac{\partial^{|m|}f}{\partial z^{m}}(0)\right|+\sup_{z\in \Bbb B}(1-|z|^2)^{N}\left(\sum_{|m |=N}\left|\frac{\partial^{N}f}{\partial z^{m}}(z)\right|^{p}\right)^{\frac{1}{p}},
\end{equation} and
the semi-norm
\begin{equation}
\|f\|_{\beta_{p}}=\sup_{z\in \Bbb B}(1-|z|^2)^{N}\left(\sum_{|m |=N}\left|\frac{\partial^{N}f}{\partial z^{m}}(z)\right|^{p}\right)^{\frac{1}{p}}
\end{equation}
where $f\in{\cal B},N \in {\rm{\bf N}},\enspace 1\leq p<\infty.$\\

By the same argument as in the proof of  Theorem 4 it can be shown that
$$\|P_{\alpha}\|_{\beta_{p}}\leq\frac{\Gamma(n+N+\alpha+1)\Gamma(N)}{\Gamma^{2}(\frac{N}{2}+\frac{n+\alpha+1}{2})}\left(\sum_{|m|=N}\max_{|\zeta|=1}|h_{m}(\zeta)|^{p}\right)^{\frac{1}{p}}.$$
In particular, when $p=2$ and $N=1$ we have
\begin{equation}
\begin{split}
\|f\|_{\beta_{2}}&=\sup_{z\in \Bbb B}(1-|z|^2)^{N}\left(\sum_{m=1}^{n}\left|\frac{\partial f}{\partial z_{1}}(z)\right|^{2}\right)^{\frac{1}{2}}\\
&=\sup_{z\in \Bbb B}(1-|z|^2)^{N}|\nabla f(z)|
\end{split}
\end{equation}

and
\begin{equation}
\begin{split}
&\|P_{\alpha}\|_{\beta_{2}}\leq\frac{\Gamma(n+N+\alpha+1)\Gamma(N)}{\Gamma^{2}(\frac{N}{2}+\frac{n+\alpha+1}{2})}\left(\sum_{|m|=N}\max_{|\zeta|=1}|h_{m}(\zeta)|^{2}\right)^{\frac{1}{2}}\\
&\leq \frac{\Gamma(n+N+\alpha+1)\Gamma(N)}{\Gamma^{2}(\frac{N}{2}+\frac{n+\alpha+1}{2})}\sqrt{n}.
\end{split}
\end{equation}
\end{remark}


\begin{thebibliography}{99}

\bibitem{Dost}  {\sc M. Dostani\'c}: \emph{Two sided norm estimate of the Bergman projection on $L^
p$ spaces.} - Czechoslovak Math. J. 58:2(133), 2008, 569-575.

\bibitem{Georg} {\sc G.\,Andrews,\, R.\,Askey,\,R.\,Roy},
{\em Special functions}, Cambridge University Press	
(2000).
\bibitem{Kalaj} {\sc D.\,Kalaj,\, M.\,Markovi\'c}, {\em Norm of the Bergman projection}, to appear in Mathematica Scandinavica.	
\bibitem{Rudin}{\sc W.\,Rudin}, {\em Real and Complex Analysis. 3rd ed.},
New York, NY: McGraw-Hill. xiv, 416 p. (1987).
\bibitem{Rudin1}{\sc W.\,Rudin}, {\em Function Theory in the Unit Ball of ${\Bbb C^n}$. Reprint of the 1980 Edition.}, Classics in Mathematics. Berlin: Springer. xviii, 436 p. (2008).
\bibitem{Ant} {\sc A.\,Per\"{a}l\"{a}}, {\em On the optimal constant for the Bergman projection onto the Bloch space}, Ann.
Acad. Sci. Fenn. Math.  \textbf{37}, 2012, 245--249..
\bibitem{Ant1} {\sc A.\,Per\"{a}l\"{a}}, {\em Bloch spaces and the norm of the Bergman projection}, Ann.
Acad. Sci. Fenn. Math.,   \textbf{ 38}, 2013, 849-853.	

\bibitem{Zhu} {\sc K.\,Zhu}, {\em Spaces of Holomorphic Functions in the Unit Ball}, vol. 226 of Graduate Texts in Mathematics,
Springer, New York, NY, USA, 2005.
\bibitem{Zhu1}
{\sc K.\,Zhu}, {\em A sharp norm estimate of the Bergman projection on $L^p$
spaces.} - Contemp. Math.
404, 2006, 199-205.

\end{thebibliography}
\end{document}